\newtheorem{theorem}{Theorem}[section]
\newtheorem{lemma}[theorem]{Lemma}
\newtheorem{lemma-def}[theorem]{Lemma-Definition}
\newtheorem{prop}[theorem]{Proposition}
\newtheorem{question}[theorem]{Question}
\theoremstyle{definition}
\newtheorem{defn}[theorem]{Definition}
\newtheorem{remark}[theorem]{Remark}
\newtheorem{example}[theorem]{Example}
\newcommand{\CC}{\mathbb{C}}
\newcommand{\FF}{\mathbb{F}}
\newcommand{\QQ}{\mathbb{Q}}
\newcommand{\AAA}{\mathbb{A}}
\newcommand{\ZZ}{\mathbb{Z}}
\newcommand{\calH}{\mathcal{H}}
\newcommand{\calA}{\mathcal{A}}
\newcommand{\frako}{\mathfrak{o}}
\DeclareMathOperator{\charac}{char}
\DeclareMathOperator{\trdeg}{trdeg}
\begin{document}

\title{Endomorphisms of power series fields and residue fields of Fargues-Fontaine curves}
\author{Kiran S. Kedlaya and Michael Temkin}
\date{March 18, 2017}
\thanks{Thanks to Brian Lawrence, Michel Matignon, Andrew Obus, and Tom Scanlon for helpful discussions.
Some of this work was carried out during the MSRI fall 2014 semester program ``New geometric methods in number theory and automorphic forms'' supported by NSF grant DMS-0932078. Kedlaya received additional support from NSF grants DMS-1101343 and
DMS-1501214 and from UC San Diego
(Stefan E. Warschawski Professorship). Temkin was supported by the Israel Science Foundation (grant No. 1159/15).}

\maketitle

\begin{abstract}
We show that for $k$ a perfect field of characteristic $p$,
there exist endomorphisms of the completed algebraic closure of $k((t))$ which are not bijective. As a corollary, we resolve a question of Fargues and Fontaine by showing that for $p$ a prime and $\CC_p$ a completed algebraic closure of $\QQ_p$, there exist closed points of the Fargues-Fontaine curve associated to $\CC_p$ whose residue fields are not (even abstractly) isomorphic to $\CC_p$ as topological fields.
\end{abstract}

\section{Introduction}

In this short note, we address the following question.
By an \emph{analytic field}, we will always mean a field complete with respect to a nonarchimedean multiplicative absolute value (assumed to be real-valued and written multiplicatively); by default, we always allow the trivial absolute value.

\begin{question} \label{Q:induced isomorphism}
Let $K$ be an analytic field. Let $k$ be a trivially valued subfield of $K$. Is every continuous $k$-linear homomorphism from $K$ to itself which induces automorphisms of residue fields and value groups necessarily surjective (and hence an automorphism)?
\end{question}

We will view Question~\ref{Q:induced isomorphism} as a collection of distinct cases indexed by the choice of $K,k$.
For example, one has affirmative answers in the following cases:
\begin{itemize}
\item
when $K$ is trivially valued, discretely valued, or more generally spherically complete (Proposition~\ref{P:spherically complete isomorphism});
\item
when $\charac(k) = 0$ and $K$ is the completed algebraic closure of a power series field over $k$
(Remark~\ref{R:char 0});
\end{itemize}
whereas one has negative answers in the following cases:
\begin{itemize}
\item
in certain cases in characteristic $0$ (Example~\ref{exa:no induced isomorphism});
\item
when $\charac(k) > 0$ and $K$ is the completed perfect closure of a power series field over $k$
(see \cite{kedlaya-series}).
\end{itemize}

Hereafter, fix a prime number $p$.
Our main result is a negative answer to Question~\ref{Q:induced isomorphism}
when $\charac(k)  = p$ and $K$ is the completed algebraic closure of a power series field over $k$.
\begin{theorem} \label{T:induced isomorphism}
Let $K$ be a completed algebraic closure of $k((t))$ for some field $k$ of characteristic $p$. Then there exists a continuous $k$-linear homomorphism $\tau: K \to K$ which is not an isomorphism.
\end{theorem}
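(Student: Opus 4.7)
The strategy is to bootstrap from the analogous non-surjectivity result for the completed perfect closure (cited from \cite{kedlaya-series}) to the completed algebraic closure. Write $F = k((t))$ and $L = \widehat{F^{\perf}}$; then $L \subseteq K$, and one checks that $K$ is naturally identified with the completed algebraic closure of $L$ (any algebraic closure of $L$ contains $\overline{F}$, and its completion is algebraically closed by Krasner, hence equals $K$).

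The first step is to invoke \cite{kedlaya-series} to fix a continuous $k$-linear field embedding $\sigma \colon L \to L$ which is not surjective, together with an element $\alpha \in L$ transcendental over $\sigma(L)$ (the construction should be arranged to supply such an $\alpha$, which is plausible since $L$ has uncountable transcendence degree over $\sigma(L)$ whenever $\sigma$ moves the distinguished series parameter nontrivially). Compose $\sigma$ with the inclusion to view it as an isometric embedding $L \hookrightarrow K$. Since $K$ is algebraically closed, a standard Zorn's lemma argument extends it to a $k$-linear field embedding $\widetilde\sigma \colon \overline{L} \hookrightarrow K$, which is automatically isometric because valuations extend uniquely along algebraic extensions of complete fields. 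Finally, since $\overline{L}$ is dense in $K$ and $\widetilde\sigma$ is isometric into a complete target, $\widetilde\sigma$ extends uniquely by continuity to a continuous $k$-linear homomorphism $\tau \colon K \to K$, which is injective as a map of fields.

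It remains to show $\tau$ is not surjective. Using density of $\overline{L}$ in $K$ together with the isometry property of $\tau$, one verifies that the image $\tau(K)$ coincides with the closure in $K$ of $M := \widetilde\sigma(\overline{L})$, which in turn is the algebraic closure of $\sigma(L)$ inside $K$. Since $\alpha$ is transcendental over $\sigma(L)$, we have $\alpha \notin M$; the remaining task is to rule out $\alpha \in \overline{M}$.

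The separation estimate $\mathrm{dist}(\alpha, M) > 0$ is the main technical obstacle. Transcendence of $\alpha$ over $\sigma(L)$ alone does not suffice, because a priori $\alpha$ could be a limit in $K$ of elements algebraic over $\sigma(L)$ (the algebraic closure of a complete subfield is typically not closed). To rule this out, we plan to exploit the explicit form of $\sigma$ and $\alpha$ provided by \cite{kedlaya-series} together with a Krasner-type argument: a very close approximation $\beta \in M$ of $\alpha$ would force the minimal polynomial relation satisfied by $\beta$ over $\sigma(L)$ to nearly hold at $\alpha$, and comparing the Newton polygons or ramification data of such polynomials against the ``generic'' structure designed into $\alpha$ over $\sigma(L)$ should yield a contradiction. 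This separation argument is where the characteristic-$p$ hypothesis enters essentially, matching the negative resolution of Question~\ref{Q:induced isomorphism} only in positive characteristic.
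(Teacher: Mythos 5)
The central gap in your plan is exactly the one you flag as ``the main technical obstacle,'' and it is not a technicality but the entire content of the theorem. Passing from the non-surjectivity of $\sigma\colon L\to L$ (where $L=\widehat{k((t))^{\perf}}$) to the non-surjectivity of $\tau\colon K\to K$ requires knowing that $L$ is not contained in the \emph{closed algebraic closure} of $\sigma(L)$ in $K$. But what \cite{kedlaya-series} gives you, writing $s=\sigma(t)$, is only that $t\notin\widehat{k((s))^{\perf}}$; you need the strictly stronger statement $t\notin\widehat{\overline{k((s))}}$. There is a genuine logical gap between ``outside the completed perfect closure'' and ``outside the completed algebraic closure,'' and nothing in the bare citation bridges it. In fact, if $t$ happened to lie in $\widehat{\overline{k((s))}}$, then $\tau(K)=\widehat{\overline{k((s))}}$ would equal $K$ and $\tau$ would be surjective despite $\sigma$ failing to be. Your sketch of a ``Krasner-type argument'' appeals to unspecified ``generic structure designed into $\alpha$,'' but no such design is carried out, and the Newton-polygon/ramification comparison is not written down; as stated, this is a plan for a proof rather than a proof.

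Two further soft spots. First, you assert that $L$ has uncountable transcendence degree over $\sigma(L)$ as ``plausible''; this is not established and is not automatic (it would in fact fail precisely in the bad case $\overline{\sigma(L)}\supseteq L$, which is exactly what must be ruled out). Second, and more structurally: restricting your attention to $\alpha\in L$ actually works against you, because the obstruction you must overcome lives at the level of the full algebraic closure of $\sigma(L)$ inside $K$, not of $L$. The paper's proof circumvents all of this by producing the asymmetry directly: it constructs a generalized power series $x=\sum c_i t^{p^{-i}d_i}$ with carefully chosen exponents that (i) by the algebraicity criterion of \cite{kedlaya-automata} defines a \emph{type~4} point over $k((t))$, so $x\notin\widehat{\overline{k((t))}}=K$, while (ii) the vanishing $\widehat{d}_{L/k}(t)=0$ in the completed K\"ahler module, combined with Lemma~\ref{L:primitive}(b), shows $t\in\widehat{\overline{k((x))}}$. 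Note that this $x$ does \emph{not} lie in $\widehat{k((t))^{\perf}}$, which is a structural reason your bootstrap from the perfect-closure case does not readily import the needed estimates: the element that witnesses non-surjectivity lives outside the completed perfect closure. Your framework (extending an embedding of $\overline L$ by continuity, identifying $\tau(K)$ with the closed algebraic closure of $\sigma(L)$) is sound, but the missing separation estimate is precisely where the $\widehat{\Omega}$ computation or the Matignon--Reversat argument must be supplied, and that work is absent.
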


The proof depends on a calculation using completed modules of K\"ahler differentials of analytic fields, as recently studied by the second author \cite{temkin}. We develop here the bare minimum of this subject needed for the proof of Theorem~\ref{T:induced isomorphism}; a more detailed treatment of completed differentials between analytic fields will be given by the second author elsewhere.

Theorem~\ref{T:induced isomorphism} was prompted by an application to a foundational question of $p$-adic Hodge theory, specifically in the \emph{perfectoid correspondence} (commonly known as \emph{tilting}) between nonarchimedean fields in mixed and equal characteristics
(generalizing the \emph{field of norms correspondence} of Fontaine and Wintenberger).
A nonarchimedean field $K$ of residue characteristic $p$ is \emph{perfectoid} if it is not discretely valued and the Frobenius automorphism on $\frako_K/(p)$ is surjective.
Given such a field, let $K^\flat$ be the inverse limit of $K$ under the $p$-power map; one then shows that $K^\flat$ naturally carries the structure of a perfectoid (and hence perfect) nonarchimedean field of equal characteristic $p$ and that there is a canonical isomorphism between the absolute Galois groups of $K$ and $K^\flat$
\cite{kedlaya-new-phigamma, kedlaya-liu1, scholze1}.
The functor $K \mapsto K^{\flat}$ is not fully faithful, even on fields of characteristic $0$; for instance, one can construct many algebraic extensions of $\QQ_p$ whose completions $K$ map to the completed perfect closure of a power series field over $\FF_p$ (e.g., the cyclotomic extension $\QQ_p(\mu_{p^\infty})$ and the Kummer extension $\QQ_p(p^{1/p^\infty})$).
However, Fargues and Fontaine have asked \cite[Remark~2.24]{fargues-fontaine-durham}
(see also \cite{fargues-fontaine}) whether this can happen for a completed algebraic closure of $\QQ_p$, and using Theorem~\ref{T:induced isomorphism} we are able to answer this question.

\begin{theorem} \label{T:same untilt Cp}
Let $\CC_p$ be a completed algebraic closure of $\QQ_p$.
Then there exists a perfectoid field $K$ which is not isomorphic to $\CC_p$ as a topological field, but for which there exists an isomorphism $K^\flat \cong \CC_p^\flat$.
\end{theorem}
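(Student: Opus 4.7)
The plan is to combine Theorem~\ref{T:induced isomorphism} with the standard tilting/untilting equivalence. The first step is to identify $\CC_p^\flat$ with a completed algebraic closure of $\FF_p((t))$: one can take $t \in \frakm_{\CC_p^\flat}$ to be the tilt of a compatible system of $p$-power roots of $p$ in $\CC_p$. Applying Theorem~\ref{T:induced isomorphism} with $k = \FF_p$ then yields a continuous $\FF_p$-linear endomorphism $\tau: \CC_p^\flat \to \CC_p^\flat$ which is not surjective.

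Next I would transport $\tau$ across the tilting correspondence. Set $F := \CC_p^\flat$ and recall the untilting presentation $\frako_{\CC_p} \cong W(\frako_F)/(\xi)$ for a primitive element $\xi = \sum_{i \geq 0} p^i [c_i]$ of degree one (so $c_0 \in \frakm_F \setminus \{0\}$ and $c_1 \in \frako_F^\times$). Let $W(\tau): W(\frako_F) \to W(\frako_F)$ be the functorial Witt endomorphism and put $\xi' := W(\tau)(\xi) = \sum p^i [\tau(c_i)]$; since $\tau$ is a field embedding, $\tau(c_0)$ still lies in $\frakm_F \setminus \{0\}$ and $\tau(c_1)$ is still a unit, so $\xi'$ is again a degree-one primitive element. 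Define $K := W(\frako_F)/(\xi')[1/p]$, a characteristic-zero perfectoid field with $K^\flat \cong \CC_p^\flat$. By construction $W(\tau)$ descends to a continuous $\ZZ_p$-algebra embedding $\iota: \CC_p \hookrightarrow K$ whose tilt is $\tau$; in particular $\iota$ is not surjective, since otherwise it would be an isomorphism and then so would $\tau$.

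The main obstacle is the final step: showing $K \not\cong \CC_p$ as a topological field. I would argue by contradiction. Suppose $\phi: K \to \CC_p$ is a topological isomorphism, and set $\psi := \phi \circ \iota : \CC_p \to \CC_p$. Both $\phi$ and $\iota$ fix $\QQ_p$ pointwise, because the copy of $\ZZ_p$ inside any characteristic-zero perfectoid field is the closure of $\ZZ \cdot 1$ and so is preserved and fixed by any continuous ring map; hence $\psi$ is a continuous $\QQ_p$-linear ring endomorphism of $\CC_p$. Since $\iota$ is not surjective while $\phi$ is bijective, $\psi$ is not surjective either. On the other hand, $\psi$ carries elements algebraic over $\QQ_p$ to algebraic elements, so $\psi(\overline{\QQ_p}) \subseteq \overline{\QQ_p}$; and since $\psi(\overline{\QQ_p})$ is an algebraically closed subfield of $\overline{\QQ_p}$ containing $\QQ_p$, it must equal $\overline{\QQ_p}$. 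Thus $\psi|_{\overline{\QQ_p}}$ lies in $\Gal(\overline{\QQ_p}/\QQ_p)$, and its unique continuous extension to $\CC_p = \widehat{\overline{\QQ_p}}$ is a bijection; by density and continuity this extension coincides with $\psi$, so $\psi$ is surjective---the desired contradiction.
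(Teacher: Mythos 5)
Your proposal is correct and follows essentially the same route as the paper: identify $\CC_p^\flat$ with a completed algebraic closure of $\FF_p((t))$, apply Theorem~\ref{T:induced isomorphism} to get a non-surjective endomorphism $\tau$ of $\CC_p^\flat$, transport it across tilting to a non-surjective embedding $\CC_p \hookrightarrow K$, and then rule out $K \cong \CC_p$. The only differences are cosmetic: you spell out the Witt-vector construction of $K$ (via $W(\tau)$ and the primitive element $\xi'$), whereas the paper just invokes the tilting equivalence; and for the final step you argue via Galois theory that any continuous $\QQ_p$-linear endomorphism of $\CC_p$ must be surjective, whereas the paper observes directly that the integral closure of $\QQ_p$ in $K$ is not dense (it lands inside the proper closed subfield $\iota(\CC_p)$) while $\overline{\QQ_p}$ is dense in $\CC_p$. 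Both final steps rest on the same density observation and are equivalent; the paper's phrasing is a bit more direct.
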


This result admits the following geometric interpretation. For each perfectoid field $K$,
Fargues and Fontaine define an associated scheme $X_K$ which is a ``complete curve'' (i.e., a regular one-dimensional noetherian scheme equipped with a surjection of its Picard group onto $\ZZ$) in terms of which $p$-adic Hodge theory over $K$ can be simply formulated.
Theorem~\ref{T:same untilt Cp} implies that for $K = \CC_p$, there exists a closed point of $X_K$ whose residue field is not isomorphic to $\CC_p$.

We conclude this introduction by pointing out that after we prepared our proof of Theorem~\ref{T:induced isomorphism}, we learned that this statement is a special case of a result of Matignon and Reversat \cite[Th\'eor\`eme~2]{matignon-reversat}. However, since our proof of the special case is somewhat simpler than the more general argument of Matignon--Reversat, we have elected to retain the proof here.

\section{Analytic fields and completed differentials}
\label{sec:covers}

As a technical input into the proof of Theorem~\ref{T:induced isomorphism}, we
review some basic properties of analytic fields and completed differentials.

\begin{defn}
By an \emph{analytic field}, we will mean a field equipped with a multiplicative nonarchimedean absolute value with respect to which the field is complete. By default, we allow the trivial absolute value. When we consider an \emph{extension} $L/K$ of analytic fields, we require that the absolute value on $L$ restricts to the absolute value on $K$.
\end{defn}

\begin{defn}
We say that an extension $L/K$ of analytic fields is \emph{primitive} if there exists $t \in L^\times$ such that $K(t)$ is dense in $L$; we will write $L = \widehat{K(t)}$ if we need to indicate the choice of $t$.

With $t$ given, the extension $\widehat{K(t)}/K$ corresponds to a point in the projective line over $K$ in the category of Berkovich nonarchimedean analytic spaces
\cite{berkovich1}. Without $t$ given, the points associated to $L/K$ are all of the same type 1--4 in Berkovich's classification \cite[(1.4.4)]{berkovich1}; we thus classify $L/K$ accordingly.
Write
\[
E_{L/K} = \dim_\QQ (|L^\times|/|K^\times|) \otimes_{\ZZ} \QQ, \qquad
F_{L/K} = \trdeg_{\kappa(K)} \kappa(L),
\]
where $\kappa(*)$ denotes the residue field of $*$;
these are determined by the type of $L/K$ as follows.
\begin{center}
\begin{tabular}{c|ccc}
Type of $L/K$ & $E_{L/K}$ & $F_{L/K}$ \\
\hline
1 & 0 & 0 \\
2 & 0 & 1 \\
3 & 1 & 0 \\
4 & 0 & 0
\end{tabular}
\end{center}
In all cases we have $E_{L/K} + F_{L/K} \leq 1$, as per Abhyankar's inequality (e.g., see \cite[Lemma~2.1.2]{temst}).
However, types 1 and 4 cannot be distinguished using $E_{L/K}$ and $F_{L/K}$ alone:
one must instead observe that
$L/K$ is of type 1 if and only if $L$ embeds into the completed algebraic closure of $K$.
\end{defn}

In order to better distinguish between primitive extensions of types 1 and 4,
we will use completed modules of differentials.

\begin{defn}
Let $L/K$ be an extension of analytic fields. As described in \cite[\S 4]{temkin},
the module $\Omega_{L/K}$ admits a maximal seminorm $\left\| \bullet \right\|$ (the \emph{K\"ahler seminorm}) with respect to which $d_{L/K}: L \to \Omega_{L/K}$ is nonexpanding. Let $\widehat{\Omega}_{L/K}$ denote the completion of $\Omega_{L/K}$
with respect to $\left\| \bullet \right\|$; it receives an induced derivation
$\widehat{d}_{L/K}: L \to \widehat{\Omega}_{L/K}$.
\end{defn}

\begin{lemma} \label{L:primitive}
Let $L/K$ be a primitive extension, and choose $t \in L$ such that $K(t)$ is dense in $L$.
\begin{enumerate}
\item[(a)]
The module $\widehat{\Omega}_{L/K}$ is generated over $L$ by the single element $\widehat{d}_{L/K}(t)$.
\item[(b)]
The equality $\widehat{\Omega}_{L/K} = 0$ holds if and only if the separable closure of $K$ in $L$ is dense. (Note that this condition implies that $L/K$ is of type 1, and conversely whenever $\charac(K)=0$.)
\end{enumerate}
\end{lemma}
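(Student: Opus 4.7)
The plan is to address parts (a) and (b) in turn, relying on density of $K(t)$ in $L$ and continuity of $\widehat{d}_{L/K}$ with respect to the K\"ahler seminorm.

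For (a), the strategy is to prove that $\widehat{d}_{L/K}(x) \in L \cdot \widehat{d}_{L/K}(t)$ for every $x \in L$. For $x \in K(t)$ this is immediate, since $\Omega_{K(t)/K}$ is the rank-one free $K(t)$-module on $d(t)$. For general $x \in L$, I would approximate $x$ by a rapidly convergent sequence $x_n \in K(t)$ (with $x_0 = 0$), write $d(x_n - x_{n-1}) = g_n \, d(t)$ with $g_n \in K(t)$, and use that nonexpansivity of $\widehat{d}_{L/K}$ together with $L$-linearity of the K\"ahler seminorm on $\widehat{\Omega}_{L/K}$ forces $|g_n| \to 0$ whenever $\widehat{d}_{L/K}(t) \neq 0$. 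The series $\sum_n g_n$ then converges in $L$ and telescopes to $\widehat{d}_{L/K}(x) = \bigl(\sum_n g_n\bigr) \widehat{d}_{L/K}(t)$. The degenerate case $\widehat{d}_{L/K}(t) = 0$ collapses $\widehat{\Omega}_{L/K}$ to zero by the same approximation, since $\widehat{\Omega}_{L/K}$ is topologically generated over $L$ by $\widehat{d}_{L/K}(L)$.

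For (b), the forward direction is routine: if $F := K^{\mathrm{sep}} \cap L$ is dense in $L$, then each $y \in F$ satisfies $d_{L/K}(y) = 0$ (its minimal polynomial $P$ over $K$ is separable, so differentiating the relation $P(y) = 0$ gives $P'(y)\, dy = 0$ with $P'(y) \neq 0$). Continuity of $\widehat{d}_{L/K}$ and approximation of $t$ by a sequence in $F$ then give $\widehat{d}_{L/K}(t) = 0$, and (a) yields $\widehat{\Omega}_{L/K} = 0$. The converse is the crux. Assuming $\widehat{\Omega}_{L/K} = 0$, the K\"ahler seminorm of $d(t)$ vanishes, and I aim to conclude that $t$ lies in the closure of $F$, which combined with density of $K(t)$ in $L$ gives density of $F$ in $L$. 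The essential input from \cite[\S 4]{temkin} is the identification of the K\"ahler seminorm of $d(t)$ with $\mathrm{dist}(t, F)$: the upper bound $\|d(t)\| \leq \mathrm{dist}(t, F)$ comes for free from $d(t) = d(t - y)$ for any $y \in F$ and nonexpansivity, while the matching lower bound amounts to showing that $\mathrm{dist}(\cdot, F)$ extends to a legitimate $L$-linear seminorm on $\Omega_{L/K}$ making $d_{L/K}$ nonexpanding.

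The parenthetical claims about types are then easy: density of $K^{\mathrm{sep}} \cap L$ in $L$ embeds $L$ into $\widehat{K^{\mathrm{sep}}} \subset \widehat{K^{\mathrm{alg}}}$, placing $L/K$ in type 1; conversely, when $\charac(K) = 0$, a type 1 extension satisfies $L \subset \widehat{K^{\mathrm{alg}}} = \widehat{K^{\mathrm{sep}}}$, so $K^{\mathrm{sep}}$ is dense in $L$. The main technical obstacle is thus the reverse direction of (b), which rests on the detailed analysis of K\"ahler seminorms in \cite{temkin}; once that identification is granted, the rest of the lemma falls out from density and continuity.
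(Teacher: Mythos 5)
Your treatment of part (a) and of the forward implication in (b) is fine and close in spirit to the paper's (the paper dispatches (a) in one sentence by noting $\Omega_{K(t)/K}$ is generated by $d(t)$; your telescoping argument makes the same density observation explicit). The problem is the reverse implication in (b), which is the entire content of the lemma and where your proposal departs completely from the paper's route.

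You reduce everything to the identity $\left\| \widehat{d}_{L/K}(t) \right\| = \mathrm{dist}(t, F)$ with $F = K^{\mathrm{sep}} \cap L$. The upper bound $\leq$ is indeed immediate, as you say. But the lower bound is precisely the hard content of the lemma, and you do not prove it: you only remark that it ``amounts to showing that $\mathrm{dist}(\cdot,F)$ extends to a legitimate $L$-linear seminorm on $\Omega_{L/K}$ making $d_{L/K}$ nonexpanding.'' That is not a reduction; it is a restatement. To build such a seminorm you would need to know, for every relation $\alpha\, d(f_1) = d(f_2)$ in $\Omega_{L/K}$, that $|\alpha|\,\mathrm{dist}(f_1,F) = \mathrm{dist}(f_2,F)$, and nothing in your sketch addresses why this holds. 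Moreover this identity is not established in \cite[\S 4]{temkin}, which only sets up the K\"ahler seminorm formalism; it does not compute $\|\widehat{d}(t)\|$ as a distance to the separable closure.

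The paper's actual proof of the reverse direction works quite differently and is where the real effort goes. It splits into two cases. When $L/K$ is not of type 1, one base-changes to a completed algebraic closure $K'$ and invokes \cite[Theorem~2.3.2(i)]{cohen-temkin-trushin} to see that $\widehat{d}_{L'/K'}(t)\neq 0$, hence $\widehat{d}_{L/K}(t)\neq 0$. When $L/K$ is of type 1 but $l$ (the separable closure of $K$ in $L$) is not dense, the paper first uses Ax--Sen and Krasner to force $\charac K = p$ and $t\in \widehat{K^{1/p^\infty}}\setminus K$, and then runs a Banach-algebra argument in $\calA = L \widehat{\otimes}_K L$: using \cite[Remark~4.3.4(ii)]{temkin} to identify $\widehat{\Omega}_{L/K}$ with $J/J^2$ for $J=\ker(\calA\to L)=(T)$, and showing $T\neq aT^2$ by a norm estimate plus the injectivity of $L\otimes_K L \to L\widehat{\otimes}_K L$ from \cite{gruson}. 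None of this is subsumed by your distance formula; if that formula were readily available, the paper would not have needed the two-case analysis. As written your proposal therefore has a genuine gap at its crux.
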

\begin{proof}
Since $\Omega_{K(t)/K}$ is generated by $d_{L/K}(t)$, (a) is obvious.

Let $l$ be the separable integral closure of $K$ in $L$.
If $l$ is dense in $L$ (which forces $L/K$ to be of type 1), then $\Omega_{l/K} = 0$
and so $\widehat{\Omega}_{L/K} = 0$. This proves the inverse implication in (b).

Suppose that $L/K$ is not of type 1.
Let $K'$ be a completed algebraic closure of $K$
and put $L' = l \widehat{\otimes}_K K'$; then the natural map
$\Omega_{L/K} \widehat{\otimes}_L L' \to \Omega_{L'/K'}$ sends
$\widehat{d}_{L/K}(t) \otimes 1$ to $\widehat{d}_{L'/K'}(t)$.
The latter is nonzero by \cite[Theorem~2.3.2(i)]{cohen-temkin-trushin},
so $\widehat{d}_{L/K}(t) \neq 0$.

It remains to consider the case when $L/K$ is of type 1 but $l$ is not dense in $L$. (Note that this last step
is not needed for the proof of Theorem~\ref{T:induced isomorphism}, so the uninterested reader can skip it.)
Observe that any separable extension of $\widehat{l}$ is the closure of a separable extension of $l$. Since
$l$ is separably closed in $L$, we obtain that $\widehat{l}$ is separably closed in $L$ too.
It suffices to show that $\widehat{d}_{L/\widehat{l}}(t)\neq 0$, so after replacing $K$ by $\widehat{l}$ we can assume
that $K=l$.

Fix an embedding of $L$ into $K'$.
Let $G$ be the group of continuous automorphisms of $K'$ fixing $K$; this group is naturally identified with the absolute Galois group of $K$. The subgroup $H$ fixing $L$ is closed in $G$, and hence is the absolute Galois group of some separable extension $L_0$ of $K$. If $\charac K = 0$, then the Ax-Sen theorem
\cite{ax} applied to both $L_0$ and $L$ implies that $(K')^H = \widehat{L_0} = L$, but this contradicts our previous assumption that $K = l \neq L$. We must then have $\charac K = p> 0$. By Ax-Sen again,
we have $(K')^H = \widehat{L_0^{1/p^\infty}} = \widehat{L^{1/p^\infty}}$.
If $L_0 \neq K$, we may choose a separable irreducible polynomial $P \in K[T]$ of degree $>1$ with a root in $L_0$; by Krasner's lemma, $P$ has a root $x$ in $L^{1/p^n}$ for some sufficiently large $n$. But then $x^{p^n} \in L$ generates a nontrivial separable extension of $K$, again contradicting our assumption that $K = l \neq L$. We conclude that $L_0 = K$ and so $t \in \widehat{K^{1/p^\infty}} \setminus K$.

Choose
$a_0=0,a_1,\ldots\in K$ such that the sequence $r_n=|t-a_n^{1/p^n}|$ converges to zero. Then $L$ is the completion
of its subalgebra $\bigcup_n k\{r_0^{-1}t,r_n^{-p^n}(t^{p^n}-a_n)\}$; in particular, $k[t]$ is dense in $L$.
Consider the Banach ring $\calA:=L\widehat{\otimes}_KL$ provided with the tensor product norm $\left\|\bullet\right\|$ and note that
the ideal $J={\rm Ker}(\calA\to L)$ is generated by $T:=1\otimes t-t\otimes 1$.

We claim that $\|T^{p^n}\|\le r_n^{p^n}$. Indeed, since $|t^{p^n}-a_n|=r_n^{p^n}$, we have that
$\|1\otimes t^{p^n}-a_n\|\le r_n^{p^n}$ and $\|t^{p^n}\otimes 1-a_n\|\le r_n^{p^n}$. (Note, for the sake of completeness,
that $T$ is quasi-nilpotent, i.e. its spectral norm vanishes, and hence
$L$ is the uniform completion of $\calA$, i.e. the completion with respect to the spectral seminorm.
This is a topological extension of the classical fact that $T$ is nilpotent and $L$ is the reduction of $\calA$
when $L/K$ is finite and purely inseparable.)

By \cite[Remark~4.3.4(ii)]{temkin}, there is an isomorphism $J/J^2\stackrel\sim\to\widehat{\Omega}_{L/K}$ that takes $T$
to $\widehat{d}_{L/K}(t)$. Thus, we should only show that $T\neq aT^2$ in $\calA$. Assume, to the contrary,
that $T=aT^2$ and set $s=\|a\|$. Then, $\|T\|=\|a^{p^n-1}T^{p^n}\|\le s^{p^n-1}r_n^{p^n}$ for any $n$ and hence
$\|T\|=0$. Thus $L\widehat{\otimes}_KL=L$ and since $L\otimes_KL$ embeds into $L\widehat{\otimes}_KL$ by \cite[3.2.1(4)]{gruson},
we obtain a contradiction.
\end{proof}


\section{Proofs and examples}

We now settle the questions raised in the introduction.

\begin{prop} \label{P:spherically complete isomorphism}
Question~\ref{Q:induced isomorphism} admits an affirmative answer if $K$ is spherically complete.
\end{prop}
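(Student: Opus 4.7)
The plan is to reduce the claim to the classical equivalence, for a nonarchimedean valued field, between spherical completeness and maximal completeness (the absence of proper immediate extensions). Let $\tau \colon K \to K$ be a continuous $k$-linear homomorphism inducing automorphisms $\bar\tau$ of $\kappa(K)$ and $\gamma$ of $|K^\times|$, so that $|\tau(x)| = \gamma(|x|)$ for all $x \in K^\times$. As a nonzero ring map of fields, $\tau$ is injective; set $S = \tau(K) \subseteq K$, so the goal becomes to show $S = K$.

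First I would verify that $S$, with the valuation restricted from $K$, is itself spherically complete. The key observation is that $\gamma$ is order-preserving: if $|x| < 1$, then $x^n \to 0$, so by continuity of $\tau$ also $\tau(x)^n \to 0$, forcing $\gamma(|x|) < 1$. Consequently $\tau$ carries each closed ball $\{x \in K : |x - x_0| \le r\}$ bijectively and inclusion-preservingly onto $\{y \in S : |y - \tau(x_0)| \le \gamma(r)\}$. Thus any nested family of closed balls in $S$ pulls back to a nested family of closed balls in $K$; the intersection of the pullback is nonempty by spherical completeness of $K$, and applying $\tau$ to a common point yields a point in the intersection of the original family.

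Next I would observe that $K/S$ is an immediate extension of valued fields: one has $|S^\times| = \gamma(|K^\times|) = |K^\times|$, and the automorphism $\bar\tau$ of $\kappa(K)$ factors through the inclusion $\kappa(S) \hookrightarrow \kappa(K)$, forcing this inclusion to be an equality. Combining this with the preceding step, the spherical (equivalently, maximal) completeness of $S$ precludes any proper immediate extension, so $K = S$ and $\tau$ is surjective. The proof is essentially an unpacking of definitions; the only point requiring any care is the verification that $\gamma$ is order-preserving, but this is immediate from continuity of $\tau$.
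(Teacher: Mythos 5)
Your proof is correct, and it is essentially the paper's argument repackaged through the classical Krull--Kaplansky equivalence between spherical completeness and maximal completeness (absence of proper immediate extensions). Where you cite that equivalence (and you only need the easy direction, spherically complete $\Rightarrow$ no proper immediate extension), the paper unfolds it directly: supposing $x \notin \tau(K)$, it uses spherical completeness to produce a best approximation $\tau(y)$ to $x$, then uses the matching of value groups and residue fields (that is, the immediacy of $K/\tau(K)$) to construct $y'$ with $|x - \tau(y+y')| < |x - \tau(y)|$, a contradiction. So the content of the two arguments is the same: your steps (1) $\tau(K)$ is spherically complete and (2) $K/\tau(K)$ is immediate are exactly the inputs the paper's improvement step consumes, and step (3) is the named theorem whose proof the paper re-derives on the spot. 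Two remarks. First, your explicit verification that $\tau(K)$ is spherically complete --- including the observation that the induced map $\gamma$ on $|K^\times|$ must be order-preserving, deduced from continuity of $\tau$ via $x^n \to 0 \Rightarrow \tau(x)^n \to 0$ --- is a genuine clarification: the paper's phrase ``since $K$ is spherically complete, the set of possible valuations of $x - \tau(y)$ has a least element'' tacitly relies on $\tau(K)$, not just $K$, being spherically complete, and your argument supplies exactly the transfer needed. Second, a small technical point in your pullback of nested balls: a closed ball in $S$ may have radius outside $|S^\times|$, where $\gamma^{-1}$ is undefined; one should invoke the standard reduction that spherical completeness need only be checked for nested families of closed balls with radii in the value group (every closed ball is an intersection of such), after which $\tau^{-1}$ carries them to genuine closed balls in $K$.
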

\begin{proof}
Let $\tau:K \to K$ be a homomorphism as in Question~\ref{Q:induced isomorphism}.
Suppose by way of contradiction that there exists $x \in K$ with $x \notin \tau(K)$.
Since $K$ is spherically complete, the set of possible valuations of $x - \tau(y)$ for $y \in K$ has a least element. If $y$ realizes this valuation,
then by the matching of value groups, we can find $y' \in K$ such that $\tau(y')$ and
$x - \tau(y)$ have the same valuation; by the matching of residue fields, we can further choose $y'$ such that $(x-\tau(y))/\tau(y')$ maps to 1 in $k$. But then
$x - \tau(y+y')$ has smaller valuation than $x-\tau(y)$, a contradiction.
\end{proof}

\begin{example} \label{exa:no induced isomorphism}
Let $k$ be an analytic field whose absolute value is nondiscrete, and choose a sequence
$x_1, x_2, \ldots \in k^\times$ such that $|x_i|<1$ and $\lim_n|x_1\cdots x_n|>0$.
(For a more concrete example, take $k$ to be a completed algebraic closure of $\CC((t))$ and take
$x_n = t^{2^{-n}}$.) Let $K$ be the completion of $k(t_1, t_2, \dots)$ for the Gauss valuation
(i.e., the valuation of a nonzero polynomial is the maximum valuation of its coefficients);
then $K$ admits a unique valuation-preserving endomorphism $\tau$ fixing $k$ and taking
$t_n$ to $t_n - x_n t_{n+1}$ for each $n$. We will show that the image of $\tau$ does not contain $t_1$, and hence $\tau$ is not an isomorphism.

Suppose to the contrary that there exists $y \in K$ with $\tau(y) = t_1$.
By hypothesis, there exists some $\lambda \in k$ such that $|\lambda| < |x_1 \cdots x_n|$ for all $n$.
We may then choose $y' \in K_0(t_1,\dots,t_n)$ for some positive integer $n$ in such a way that $|y - y'| < |\lambda|$.
Put $y'' = t_1 + x_1 t_2 + \cdots + x_1 \cdots x_n t_{n+1}$;
then $\tau(y'') = t_1 - x_1 \cdots x_{n+1} t_{n+2}$,
so $|y''-y| = |\tau(y''-y)| = |x_1 \cdots x_{n+1}| > |\lambda|$.
Hence $|y'' - y'| = |x_1 \cdots x_{n+1}|$,
but $y''- y'$ equals $x_1 \cdots x_n t_{n+1}$ plus an element of $k(t_1,\dots,t_n)$
and so cannot have valuation less than $|x_1 \cdots x_n|$.
This yields the desired contradiction.
\end{example}

\begin{remark} \label{R:char 0}
Let $k$ be a field of characteristic $0$. For each positive integer $n$, the derivation
$\frac{d}{dt}$ on $k((t))$ extends to the derivation $\partial_n=n^{-1} t^{1/n - 1} \frac{d}{dt^{1/n}}$ on $k((t^{1/n}))$ satisfying
$|\partial_nf|\le|t|^{-1}|f|$ for any $f\in k((t^{1/n}))$. Let $K$ be a completed algebraic closure of $k((t))$;
by Puiseux's theorem,  $K$ is the completion of $\bigcup_{n=1}^\infty \overline{k}((t^{1/n}))$ for $\overline{k}$ the algebraic closure of $k$ in $K$,
so the derivation $\frac{d}{dt}$ extends uniquely to a continuous derivation on $K$.
Consequently, $\widehat{\Omega}_{K/k}$ is generated by $\widehat{d}_{K/k}(x)$ for
any $x \in K - \overline{k}$. For any $k$-linear automorphism $\tau$ of $K$,
let $L$ be the completion of $\tau(K)(t)$ within $K$; taking $x = \tau(t)$ in the previous discussion shows that
$\widehat{\Omega}_{L/\tau(K)} = 0$. By Lemma~\ref{L:primitive}(b) we conclude that
$L/\tau(K)$ is of type 1. Since $\tau(K)$ is algebraically closed, it follows that $L = \tau(K)$ and hence $\tau$ is an isomorphism.
\end{remark}

\begin{proof}[Proof of Theorem~\ref{T:induced isomorphism}]
Choose a sequence $\{d_i\}_{i=1}^\infty$ of positive integers in such a way that:
\begin{enumerate}
\item[(a)]
$d_i$ is not divisible by $p$;
\item[(b)]
$\lim_{i \to \infty} (d_{i+1} - pd_i) = \infty$; and
\item[(c)]
the sequence $\{p^{-i} d_i\}_{i=1}^\infty$
is strictly increasing (for large $i$, this follows from (b))
and bounded.
\end{enumerate}
For a concrete example, take
\[
d_i := 1 + pi + p^2(i-1) + p^3(i-2) + \cdots + p^i.
\]
Choose a sequence $\{c_i\}_{n=1}^\infty$ of elements of $k$ such that each field $\FF_p(c_i)$ is finite, but the field $\FF_p(c_1, c_2, \dots)$ is infinite (in fact any $c_i\neq 0$ will do, but this assumption shortens the argument).
Set
\[
\alpha_n := \sum_{i=0}^n c_i t^{p^{-i} d_i} \in K
\]
and
\[
r_n := \left| \alpha_{n+1} - \alpha_n \right| = |t|^{p^{-n-1} d_{n+1}};
\]
by construction, $\{r_n\}_{n=1}^\infty$ is a strictly decreasing sequence with nonzero limit. Consider the Berkovich affine line $\AAA^1_{k((t))}$ with coordinate $x$ and let $E_n$ be the closed disc of radius $r_n$ centered at $\alpha_n$. The intersection of the $E_n$ does not contain any element of any finite extension of $k((t))$, so it consists of a single point $z$ of type 4. (Otherwise, by \cite[Lemma~10.1, Corollary~11.9]{kedlaya-automata} the generalized power series $x = \sum_{i=0}^\infty c_i t^{p^{-i} d_i}$ would be algebraic over $k(t)$, hence over $\overline{\FF}_p(t)$ because the minimal polynomial must be invariant under coefficientwise automorphisms, hence over $\FF_p(t)$; but then \cite[Corollary~11.9]{kedlaya-automata} would force the $c_i$ to belong to a finite extension of $\FF_p$.) The completed residue field $L=\calH(z)$ of this point is a primitive extension of $k((t))$ topologically generated by $x$, and the conditions $z\in E_n$ mean that $\left| x^{p^n} - \alpha_n^{p^n} \right| = r_n^{p^n}$ for each $n$.
Since $\widehat{d}_{L/k}$ is nonexpanding,
we have $\left\| \widehat{d}_{L/k}(\alpha_n^{p^n}) \right\| \leq r_n^{p^n}$.
Furthermore, in the expression $\alpha_n^{p^n} = \sum_{i=0}^n t^{p^{n-i} d_i}$
only the term $t^{d_n}$ is not a $p$-th power, so
\[
\widehat{d}_{L/k}(\alpha_n^{p^n}) = \widehat{d}_{L/k}(t^{d_n}) = d_n t^{d_n-1} \widehat{d}_{L/k}(t)
\]
and hence (since $d_n$ is not divisible by $p$)
\[
\left\| \widehat{d}_{L/k}(t) \right\| \leq r_n^{p^n} |t|^{1-d_n}
= |t|^{p^{-1} d_{n+1} - d_n + 1}.
\]
Since this holds for all $n$, we conclude that $\left\| \widehat{d}_{L/k}(t) \right\| = 0$.

By the previous paragraph, $\widehat{d}_{L/k}(t) = 0$ and hence
$\widehat{d}_{L/k((x))}(t) = 0$. By Lemma~\ref{L:primitive}, $L/k((x))$ is a primitive extension of type 1;
the inclusion $k((x)) \to L$ thus induces an isomorphism of completed
algebraic closures. That is, $t$ belongs to the completed algebraic closure of $k((x))$,
but $x$ does not belong to the completed algebraic closure of $k((t))$.
If we write $K'$ for a completed algebraic closure of $k((x))$, we then have a strict inclusion
$K \to K'$; composing this with an identification $K' \cong K$ yields the desired endomorphism.
\end{proof}

\begin{proof}[Proof of Theorem~\ref{T:same untilt Cp}]
We use \cite[Theorem~1.5.6]{kedlaya-new-phigamma} as our blanket reference concerning the perfectoid correspondence.
By \cite[Example~1.3.5]{kedlaya-new-phigamma}, there is an algebraic extension of $\QQ_p$ whose completion is perfectoid with tilt isomorphic to the completed perfect closure of $\FF_p((t))$; hence $\CC_p$ is perfectoid and $\CC_p^\flat$ is isomorphic to the completed algebraic closure of $\FF_p((t))$.
By Theorem~\ref{T:induced isomorphism}, there exists an endomorphism $\tau: \CC_p^\flat \to \CC_p^\flat$ which is not surjective; this corresponds to a morphism $\CC_p \to K$ of perfectoid fields which is not surjective either. In particular, the integral closure of $\QQ_p$ in $K$ is not dense, so $K$ cannot admit any isomorphism to $\CC_p$ in the category of topological fields.
\end{proof}

\end{document}